\theoremstyle{plain}
\newtheorem{theorem}{Theorem}
\newtheorem{corollary}{Corollary}
\theoremstyle{definition}
\newtheorem{definition}{Definition}
\newtheorem{example}{Example}
\newtheorem{remark}{Remark}
\numberwithin{equation}{section}
\numberwithin{theorem}{section}
\numberwithin{corollary}{section}
\numberwithin{definition}{section}
\numberwithin{remark}{section}
\numberwithin{lemma}{section}
\def\R{{\mathbb R}}
\begin{document}

\title[fractional Sturm-Liouville bvp] {fractional-order boundary value problem with Sturm-Liouville boundary conditions}
\author[Anderson]{Douglas R. Anderson} 
\address{Department of Mathematics \\
         Concordia College \\
         Moorhead, MN 56562 USA}
\email{andersod@cord.edu}

\author[Avery]{Richard I. Avery} 
\address{College of Arts and Sciences \\ 
         Dakota State University\\
         Madison, SD 57042, USA}
\email{rich.avery@dsu.edu}


\keywords{conformable fractional derivative, boundary value problem, positivity, Green's function, conjugate conditions}
\subjclass[2010]{26A33}

\begin{abstract}
Using the new conformable fractional derivative, which differs from the Riemann-Liouville and Caputo fractional derivatives, we reformulate the second-order conjugate boundary value problem in this new setting. Utilizing the corresponding positive fractional Green's function, we apply a functional compression-expansion fixed point theorem to prove the existence of a positive solution. We then compare our results favorably to those based on the Riemann-Liouville fractional derivative.
\end{abstract}

\maketitle

\section{Introduction}

The search for the existence of positive solutions and multiple positive solutions to nonlinear fractional boundary value problems has expanded greatly over the past decade; for some recent examples please see [3-9,11,14,15,17-20]. In all of these works and the references cited therein, however, the definition of the fractional derivative used is either the Caputo or the Riemann-Liouville fractional derivative, involving an integral expression and the gamma function. Recently \cite{udita,hammad,khalil} a new definition has been formulated and dubbed the conformable fractional derivative. In this paper, we use this fractional derivative of order $\alpha$, given by
\begin{equation}\label{derivdef}
 D^{\alpha}f(t):=\lim_{\varepsilon\rightarrow 0}\frac{f(te^{\varepsilon t^{-\alpha}})-f(t)}{\varepsilon}, \quad D^{\alpha}f(0)=\lim_{t\rightarrow 0^+}D^{\alpha}f(t);  
\end{equation}
note that if $f$ is differentiable, then
\begin{equation}\label{fracshort}
 D^{\alpha}f(t) = t^{1-\alpha} f'(t), 
\end{equation}
where $f'(t)=\lim_{\varepsilon\rightarrow 0}[f(t+\varepsilon)-f(t)]/\varepsilon$.
Using this new definition of the fractional derivative, we investigate a conformable fractional boundary value problem with Sturm-Liouville boundary conditions. With the fractional differential equation and fractional boundary conditions established, we find the corresponding Green's function and prove its positivity under appropriate assumptions. This work thus sets the stage for employing a functional compression-expansion fixed point theorem to prove the existence of a positive solution to the special case of conjugate boundary conditions. We then compare our existence result to that of Bai and L\"{u} \cite{bai}.

\section{Two Iterated Fractional Derivatives}

We begin by considering two iterated fractional derivatives in the differential operator, together with two-point boundary conditions, as illustrated in the nonlinear boundary value problem 
	\begin{gather}
	-D^{\beta}D^{\alpha} x(t) = f(t,x(t)), \qquad 0 \le t \le 1, \label{2eigeneqn} \\
	\gamma x(0)-\delta D^\alpha x(0) = 0 = \eta x(1)+\zeta D^{\alpha} x(1), \label{2eigenbc}
	\end{gather}
where	$\alpha,\beta\in(0,1]$ and the derivatives are conformable fractional derivatives \eqref{derivdef}, with $\gamma,\delta,\eta,\zeta\ge 0$ and $d:=\eta\delta+\gamma\zeta+\gamma\eta/\alpha>0$. Note that if $x$ is $\alpha$-differentiable and $t^{1-\alpha}x'$ is $\beta$-differentiable, then using \eqref{fracshort} we could rewrite \eqref{2eigeneqn} as
\[ 	-t^{1-\beta}\left(t^{1-\alpha} x'(t)\right)' = f(t,x(t)), \qquad 0 \le t \le 1, \]
where the prime indicates the derivative $d/dt$.
Before we find Green's function for \eqref{2eigeneqn}, \eqref{2eigenbc}, and prove that it is positive, we first define the $\beta$-fractional integral.


\begin{definition} 
Let $\beta\in(0,1]$ and $0 \le a < b$. A function $f:[a,b]\rightarrow\R$ is $\beta$-fractional integrable on $[a,b]$ if the integral
\[ \int_a^b f(s) d_{\beta}s:= \int_a^b f(s)s^{\beta-1}ds \]
exists and is finite. Thus the integral can be interpreted either as a Riemann-Stieltjes integral or an improper Riemann integral.
\end{definition}


\begin{theorem}
Let $\alpha,\beta\in(0,1]$. The corresponding Green's function for the homogeneous problem 
\[ -D^{\beta}D^{\alpha} x(t)=0 \] 
satisfying the boundary conditions \eqref{2eigenbc} is given by 
\begin{equation}\label{gf2}
	G(t,s)=
			\begin{cases}
				\frac{1}{d}\left[\delta+\frac{\gamma}{\alpha} s^\alpha\right]\left[\zeta+\frac{\eta}{\alpha}\left(1-t^\alpha\right)\right] &:s\le t, \\
				\frac{1}{d}\left[\delta+\frac{\gamma}{\alpha} t^\alpha\right]\left[\zeta+\frac{\eta}{\alpha}\left(1-s^\alpha\right)\right] &:t\le s, 
			\end{cases}
\end{equation}
where we assume the parameters satisfy $\gamma,\delta,\eta,\zeta\ge 0$ and $d=\eta\delta+\gamma\zeta+\gamma\eta/\alpha>0$.
\end{theorem}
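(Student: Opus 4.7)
The plan is to follow the classical Sturm--Liouville Green's function construction: produce two solutions of the homogeneous equation, one satisfying each boundary condition, and combine them with a normalizing constant chosen so that $x(t):=\int_0^1 G(t,s) f(s)\, d_\beta s$ inverts $-D^\beta D^\alpha$ on functions meeting \eqref{2eigenbc}.

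First I would solve $-D^\beta D^\alpha x=0$ by setting $u:=D^\alpha x$. The equation $D^\beta u=0$, together with \eqref{fracshort}, forces $u'(t)=0$ for $t>0$, so $D^\alpha x$ is a constant $c_2$. Then $t^{1-\alpha} x'(t)=c_2$ integrates to $x(t)=c_1+(c_2/\alpha)t^\alpha$, the two-parameter general solution. From this family I would select
$$u(t):=\delta+\frac{\gamma}{\alpha}t^\alpha, \qquad v(t):=\zeta+\frac{\eta}{\alpha}(1-t^\alpha),$$
and a direct substitution (using $D^\alpha u\equiv\gamma$ and $D^\alpha v\equiv-\eta$) shows that $u$ satisfies the left condition of \eqref{2eigenbc} while $v$ satisfies the right. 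The proposed Green's function \eqref{gf2} is then exactly $G(t,s)=u(\min\{t,s\})\,v(\max\{t,s\})/d$.

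Verification proceeds by computing $-D^\beta D^\alpha x$ for $x(t):=\int_0^1 G(t,s) f(s)\, d_\beta s$. Splitting the integral at $s=t$, the continuity of $G$ on the diagonal kills the endpoint terms produced by differentiating the limits, and applying $D^\alpha = t^{1-\alpha} d/dt$ yields
$$d\cdot D^\alpha x(t) \;=\; -\eta\int_0^t u(s) f(s)\, d_\beta s \;+\; \gamma\int_t^1 v(s) f(s)\, d_\beta s.$$
A second differentiation followed by multiplication by $t^{1-\beta}$ produces $D^\beta D^\alpha x(t) = -\frac{1}{d}[\eta u(t)+\gamma v(t)] f(t)$.

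The pivotal step, and the only real obstacle, is the algebraic identity
$$\eta u(t)+\gamma v(t)\;\equiv\;\eta\delta+\gamma\zeta+\frac{\gamma\eta}{\alpha}\;=\;d,$$
which holds because the $t^\alpha$ contributions from $u$ and $v$ cancel; this identity is precisely what pins down the normalizer $d$ and yields $-D^\beta D^\alpha x = f$. The boundary conditions at $t=0$ and $t=1$ reduce to $\gamma u(0)-\delta D^\alpha u(0)=0$ and $\eta v(1)+\zeta D^\alpha v(1)=0$, which are immediate from the construction of $u$ and $v$, and the standing hypothesis $d>0$ simply ensures that $G$ is well defined.
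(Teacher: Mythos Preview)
Your proof is correct and follows essentially the same verification as the paper: split $\int_0^1 G(t,s)f(s)\,d_\beta s$ at $s=t$, compute $D^\alpha x$ to obtain exactly the paper's intermediate expression, then apply $D^\beta$ and invoke the identity $\eta u(t)+\gamma v(t)\equiv d$ to conclude. The only difference is your constructive preamble---solving $-D^\beta D^\alpha x=0$ to get $x(t)=c_1+(c_2/\alpha)t^\alpha$ and then selecting the one-sided solutions $u,v$---which the paper omits in favor of simply writing down \eqref{gf2} and verifying it directly.
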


\begin{proof}
We will show that
\[ x(t)=\int_{0}^{1}G(t,s)h(s)d_{\beta}s, \]
for $G$ given by \eqref{gf2}, is a solution to the linear boundary value problem
\[ -D^{\beta}D^{\alpha} x(t) = h(t) \]
with boundary conditions \eqref{2eigenbc}. 

For any $t\in[0,1]$, using the branches of \eqref{gf2} we have
\begin{eqnarray*} 
 x(t) &=& \frac{1}{d}\left[\zeta+\frac{\eta}{\alpha}\left(1-t^\alpha\right)\right] \int_{0}^{t} \left[\delta+\frac{\gamma}{\alpha} s^\alpha\right] h(s)d_{\beta}s \\
  & & + \frac{1}{d}\left[\delta+\frac{\gamma}{\alpha} t^\alpha\right] \int_{t}^{1} \left[\zeta+\frac{\eta}{\alpha}\left(1-s^\alpha\right)\right]h(s)d_{\beta}s.
\end{eqnarray*}
Taking the $\alpha$-fractional derivative yields
\begin{eqnarray*} 
 D^{\alpha}x(t) &=& -\frac{\eta}{d} \int_{0}^{t} \left[\delta+\frac{\gamma}{\alpha} s^\alpha\right] h(s)d_{\beta}s 
   + \frac{\gamma}{d} \int_{t}^{1} \left[\zeta+\frac{\eta}{\alpha}\left(1-s^\alpha\right)\right]h(s)d_{\beta}s.
\end{eqnarray*}
Checking the first boundary condition, we see that
\[ \gamma x(0)-\delta D^\alpha x(0) = 0. \]
Moreover, in checking the second boundary condition we get
\[ \eta x(1)+\zeta D^{\alpha} x(1) = 0. \]
Taking the $\beta$-fractional derivative of the $\alpha$-fractional derivative yields
\begin{eqnarray*} 
 D^{\beta}D^{\alpha}x(t) &=& -\frac{\eta}{d} \left[\delta+\frac{\gamma}{\alpha} t^\alpha\right] h(t)t^{\beta-1}t^{1-\beta} 
   - \frac{\gamma}{d} \left[\zeta+\frac{\eta}{\alpha}\left(1-t^\alpha\right)\right]h(t)t^{\beta-1}t^{1-\beta} \\
   &=& -\frac{1}{d}h(t)\left[ \eta\delta  + \gamma\zeta+\frac{\gamma\eta}{\alpha}\right] = -h(t),
\end{eqnarray*}
which is what we set out to prove.
\end{proof}


\begin{corollary}[Fractional Conjugate and Right-Focal Problems]
Let $\alpha,\beta\in(0,1]$. The corresponding Green's function for the homogeneous problem 
\[ -D^{\beta}D^{\alpha} x(t)=0 \] 
satisfying the conjugate boundary conditions $x(0)=x(1)=0$ is given by 
\begin{equation}\label{gfconj}
	G(t,s)=
			\begin{cases}
				\frac{1}{\alpha} s^\alpha\left(1-t^\alpha\right) &:s\le t, \\
				\frac{1}{\alpha} t^\alpha\left(1-s^\alpha\right) &:t\le s, 
			\end{cases}
\end{equation}
and the corresponding Green's function for the homogeneous problem 
\[ -D^{\beta}D^{\alpha} x(t)=0 \] 
satisfying the right-focal-type boundary conditions $x(0)=D^{\alpha}x(1)=0$ is given by 
\begin{equation}\label{gfrtfoc}
	G(t,s)=
			\begin{cases}
				\frac{1}{\alpha} s^\alpha &:s\le t, \\
				\frac{1}{\alpha} t^\alpha &:t\le s.
			\end{cases}
\end{equation}
\end{corollary}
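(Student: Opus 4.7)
The plan is to obtain both Green's functions as direct specializations of the general Sturm--Liouville Green's function \eqref{gf2} already established in the theorem, by choosing the four parameters $\gamma,\delta,\eta,\zeta$ so that the Sturm--Liouville boundary conditions \eqref{2eigenbc} collapse to the conjugate (resp.\ right-focal) conditions. Before substituting, I would verify in each case that the chosen parameters are nonnegative and produce $d>0$, so that the hypotheses of the theorem are satisfied and \eqref{gf2} applies.

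For the conjugate problem $x(0)=x(1)=0$, I would take $\gamma=\eta=1$ and $\delta=\zeta=0$. Then the first boundary condition becomes $\gamma x(0)-\delta D^\alpha x(0)=x(0)=0$, the second becomes $\eta x(1)+\zeta D^\alpha x(1)=x(1)=0$, and $d=\eta\delta+\gamma\zeta+\gamma\eta/\alpha=1/\alpha>0$. Plugging these values into \eqref{gf2}, the brackets $\bigl[\delta+\tfrac{\gamma}{\alpha}s^\alpha\bigr]$ and $\bigl[\zeta+\tfrac{\eta}{\alpha}(1-t^\alpha)\bigr]$ reduce to $\tfrac{s^\alpha}{\alpha}$ and $\tfrac{1-t^\alpha}{\alpha}$, the prefactor $1/d$ becomes $\alpha$, and the product simplifies to $\tfrac{1}{\alpha}s^\alpha(1-t^\alpha)$ on $s\le t$ (and symmetrically on $t\le s$), recovering \eqref{gfconj}.

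For the right-focal problem $x(0)=D^\alpha x(1)=0$, I would instead take $\gamma=\zeta=1$ and $\delta=\eta=0$. Then $\gamma x(0)-\delta D^\alpha x(0)=x(0)=0$, $\eta x(1)+\zeta D^\alpha x(1)=D^\alpha x(1)=0$, and $d=\eta\delta+\gamma\zeta+\gamma\eta/\alpha=1>0$. Substituting into \eqref{gf2}, the brackets involving $\eta$ collapse to $\zeta=1$, and the brackets involving $\delta$ collapse to $\tfrac{\gamma}{\alpha}s^\alpha$ or $\tfrac{\gamma}{\alpha}t^\alpha$, yielding \eqref{gfrtfoc}.

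There is no real obstacle here; the corollary is purely a matter of specializing the parameters and carrying out a short algebraic simplification. The only point requiring care is bookkeeping the signs in the Sturm--Liouville conditions so that the correct choices of $\gamma,\delta,\eta,\zeta$ genuinely reproduce $x(0)=0$, $x(1)=0$, and $D^\alpha x(1)=0$, and checking that the admissibility condition $d>0$ is retained in each specialization.
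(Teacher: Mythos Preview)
Your proposal is correct and matches the paper's approach: the corollary is stated without proof and is meant as an immediate specialization of \eqref{gf2}, with the parameter choices $\gamma=\eta=1$, $\delta=\zeta=0$ (conjugate) and $\gamma=\zeta=1$, $\delta=\eta=0$ (right-focal) confirmed later in Remark~2.2. Your verification of $d>0$ and the algebraic simplification are exactly what is needed.
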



\begin{remark}
Note that the conformable fractional Green's function based on \eqref{derivdef} and given above for the conjugate boundary conditions in \eqref{gfconj} differs from that found for example in Bai and L\"{u} \cite{bai}, where the Riemann-Liouville fractional derivative is used.
\end{remark}


\begin{theorem}[Bounds on Green's Function]
For $G(t,s)$ given in \eqref{gf2}, we have the following bounds. First,
\begin{equation}\label{Gbds2} 
  g_1(t)G(s,s) < G(t,s) \le G(s,s)
\end{equation}
for $t,s\in[0,1]$, where
\begin{equation}\label{g2}
  g_1(t):=\min\left\{\frac{\alpha\delta+\gamma  t^\alpha}{\alpha\delta+\gamma},\frac{\alpha\zeta+\eta\left(1-t^{\alpha}\right)}{\alpha\zeta+\eta}\right\}.
\end{equation}
Next, for integers $n\ge 3$,
\begin{equation}\label{Gboundsgen}
 \min_{\frac{1}{n}\le t \le 1-\frac{1}{n}}G(t,s) \ge g_2(s)G(s,s)
\end{equation}
for $g_2$ given by
\begin{equation}\label{ggen}
 g_2(s):= \begin{cases}
     \displaystyle\frac{\alpha\zeta+\eta\left(1-\left(1-1/n\right)^\alpha\right)}{\alpha\zeta+\eta\left(1-s^\alpha\right)} &: s\in[0,r] \\
     \displaystyle\frac{\alpha\delta+\gamma(1/n)^\alpha}{\alpha\delta+\gamma  s^\alpha} &: s\in[r,1]
  \end{cases}
\end{equation}
for the constant
\begin{equation}\label{rgendef} r=\left(\frac{\alpha\gamma\zeta+\gamma\eta+\alpha\delta\eta(n-1)^{\alpha}}{\left(\alpha\delta\eta+\alpha\gamma\zeta+\gamma\eta\right)n^{\alpha}-\gamma\eta(n-1)^{\alpha}+\gamma\eta}\right)^{1/\alpha}\in\left(\frac{1}{n},1-\frac{1}{n}\right],
\end{equation}
where $r=1-1/n$ if $\gamma=0$. Finally, we also have
\begin{equation}\label{Gbdconst}
 \min_{\frac{1}{n}\le t \le 1-\frac{1}{n}}G(t,s) \ge g_3G(s,s)
\end{equation}
for the constant $g_3$ given by
\begin{equation}\label{g3def}
 g_3\equiv \min\left\{\frac{\alpha\zeta+\eta\left(1-\left(1-1/n\right)^\alpha\right)}{\alpha\zeta+\eta}, \frac{\alpha\delta+\gamma(1/n)^\alpha}{\alpha\delta+\gamma}\right\} 
\end{equation}
for all $\alpha\in(0,1]$.
\end{theorem}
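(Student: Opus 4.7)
The plan is to exploit the simple multiplicative structure of $G(t,s)$ in each branch of \eqref{gf2} and to reduce everything to monotonicity arguments in one variable at a time.

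For \eqref{Gbds2}, I would fix $s$ and compute the ratio $G(t,s)/G(s,s)$ directly from \eqref{gf2}. On the branch $t\ge s$ the factor $\delta+\tfrac{\gamma}{\alpha}s^\alpha$ cancels, leaving
\[
\frac{G(t,s)}{G(s,s)} \;=\; \frac{\alpha\zeta+\eta(1-t^\alpha)}{\alpha\zeta+\eta(1-s^\alpha)},
\]
which is at most $1$ and, when regarded as a function of $s$, is smallest at $s=0$, producing the second component of $g_1(t)$. A symmetric calculation on the branch $t\le s$ yields the ratio $(\alpha\delta+\gamma t^\alpha)/(\alpha\delta+\gamma s^\alpha)$, smallest at $s=1$, giving the first component. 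Since exactly one branch applies for any given $(t,s)$, taking the minimum of the two yields the lower bound $g_1(t)$, while the upper bound $G(t,s)\le G(s,s)$ is immediate.

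For \eqref{Gboundsgen}, I would again fix $s$ and examine $G(t,s)$ as a function of $t\in[1/n,1-1/n]$. The branch expressions show $G(\cdot,s)$ is nondecreasing in $t$ on $[0,s]$ and nonincreasing on $[s,1]$, so the minimum over $t\in[1/n,1-1/n]$ is always attained at one of the endpoints. The two candidate normalized values are
\[
\phi_1(s):=\frac{\alpha\delta+\gamma(1/n)^\alpha}{\alpha\delta+\gamma s^\alpha},\qquad \phi_2(s):=\frac{\alpha\zeta+\eta(1-(1-1/n)^\alpha)}{\alpha\zeta+\eta(1-s^\alpha)}.
\]
Here $\phi_1$ is nonincreasing in $s$ and $\phi_2$ is nondecreasing, with $\phi_1(1/n)=1=\phi_2(1-1/n)$, and (since $n\ge 3$) $\phi_2(1/n)<1$ and $\phi_1(1-1/n)<1$. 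Consequently $\phi_1$ and $\phi_2$ coincide at a unique point $r\in[1/n,1-1/n]$, with $\phi_2\le\phi_1$ on $[0,r]$ and the reverse on $[r,1]$; this is precisely the piecewise definition of $g_2(s)$ in \eqref{ggen}.

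The main obstacle is the explicit computation of $r$. Setting $\phi_1(r)=\phi_2(r)$, cross-multiplying, and cancelling the $\alpha^2\delta\zeta$ terms produces a linear equation in $r^\alpha$ whose solution, after the substitutions $(1/n)^\alpha=n^{-\alpha}$ and $(1-1/n)^\alpha=(n-1)^\alpha/n^\alpha$, yields the formula \eqref{rgendef}; I also need to check $r\in(1/n,1-1/n]$ and that the degenerate case $\gamma=0$ (in which $\phi_1\equiv 1$) forces $r=1-1/n$. Finally, \eqref{Gbdconst} follows by minimizing $g_2(s)$ over $s\in[0,1]$: on $[0,r]$ the nondecreasing $\phi_2$ is smallest at $s=0$, on $[r,1]$ the nonincreasing $\phi_1$ is smallest at $s=1$, and the smaller of these two boundary values is exactly $g_3$ in \eqref{g3def}.
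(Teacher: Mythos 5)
Your proposal is correct and follows essentially the same route as the paper: compute the ratio $G(t,s)/G(s,s)$ branch by branch for \eqref{Gbds2}, use the unimodality of $G(\cdot,s)$ in $t$ to reduce the minimum over $t\in[1/n,1-1/n]$ to the two endpoint ratios $\phi_1,\phi_2$ whose crossover point is $r$, and then bound each branch of $g_2$ by its worst value at $s=0$ or $s=1$ to obtain $g_3$. If anything, you supply slightly more detail than the paper, which simply asserts the value of $r$ in \eqref{rgendef} rather than deriving it from the equation $\phi_1(r)=\phi_2(r)$ as you do.
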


\begin{proof}
It is straightforward to see that
\begin{equation}\label{GoverG}
 \frac{G(t,s)}{G(s,s)}=\begin{cases}
     \displaystyle\frac{\zeta+\frac{\eta}{\alpha}(1-t^\alpha)}{\zeta+\frac{\eta}{\alpha}(1-s^\alpha)} &: s\le t, \\
      & \\
     \displaystyle\frac{\delta + \frac{\gamma}{\alpha}t^\alpha}{\delta + \frac{\gamma}{\alpha}s^\alpha} &: t\le s;
  \end{cases} 
\end{equation}
this expression yields both inequalities in \eqref{Gbds2} for $g_1$ as in \eqref{g2}.

Next, let
\[ u(t,s) = \frac{1}{d}\left[\delta+\frac{\gamma}{\alpha} t^\alpha\right]\left[\zeta+\frac{\eta}{\alpha}\left(1-s^\alpha\right)\right], \]
so that
\[ G(t,s) =  \begin{cases}
     u(s,t) &: s\le t, \\
     u(t,s) &: t\le s.
  \end{cases} \]
Let $r$ be given by \eqref{rgendef}. Then we have
\begin{eqnarray*}
 \min_{\frac{1}{n}\le t \le 1-\frac{1}{n}}G(t,s) 
 &=&
  \begin{cases}
     u(s,1-1/n) &: s\in[0,1/n], \\
     \min\{u(s,1-1/n), u(1/n,s)\} &: s\in[1/n,1-1/n], \\
     u(1/n,s) &: s\in[1-1/n,1],
  \end{cases}  \\
 &=& \begin{cases}
     u(s,1-1/n) &: s\in[0,r], \\
     u(1/n,s) &: s\in[r,1],
  \end{cases} \\
 &=& \begin{cases}
     \frac{1}{d}\left[\delta+\frac{\gamma}{\alpha} s^\alpha\right]\left[\zeta+\frac{\eta}{\alpha}\left(1-\left(1-1/n\right)^\alpha\right)\right] &: s\in[0,r], \\
     \frac{1}{d}\left[\delta+\frac{\gamma}{\alpha} (1/n)^\alpha\right]\left[\zeta+\frac{\eta}{\alpha}\left(1-s^\alpha\right)\right] &: s\in[r,1],
  \end{cases}
\end{eqnarray*}
where $r$ is given in \eqref{rgendef}. By the monotonicity of $G(t,s)$, we have
\[ \max_{0\le t\le 1} G(t,s) = G(s,s) = \frac{1}{d}\left[\delta+\frac{\gamma}{\alpha} s^\alpha\right]\left[\zeta+\frac{\eta}{\alpha}\left(1-s^\alpha\right)\right], \quad s\in[0,1]. \]
Therefore if we take $g_2$ as in \eqref{ggen}, then $G(t,s)$ satisfies \eqref{Gboundsgen}.
Now since 
\[ \frac{\alpha\zeta+\eta\left(1-\left(1-1/n\right)^\alpha\right)}{\alpha\zeta+\eta\left(1-s^\alpha\right)} \ge \frac{\alpha\zeta+\eta\left(1-\left(1-1/n\right)^\alpha\right)}{\alpha\zeta+\eta}, \quad s\in[0,r] \]
and
\[ \frac{\alpha\delta+\gamma(1/n)^\alpha}{\alpha\delta+\gamma  s^\alpha} \ge \frac{\alpha\delta+\gamma(1/n)^\alpha}{\alpha\delta+\gamma}, \quad s\in[r,1], \]
we could in \eqref{Gboundsgen} use the constant $g_3$ given in \eqref{g3def} instead of \eqref{ggen}. This constant is well defined and strictly positive, since $d>0$ in \eqref{gf2} implies neither $\delta$ and $\gamma$ nor $\eta$ and $\zeta$ can simultaneously be zero. 
\end{proof}


\begin{remark}
For conjugate boundary conditions, $\gamma=\eta=1$ and $\delta=\zeta=0$; if $\alpha=1$ and $n=4$, then $g_3\equiv 1/4$, the constant used in \cite[(3.4)]{ht}. For right-focal boundary conditions, $\gamma=\zeta=1$ and $\delta=\eta=0$, so that clearly $g_3\equiv (1/n)^{\alpha}$ for all $\alpha\in(0,1]$ and all integers $n\ge 3$. In the conjugate case specifically, the constant bound \eqref{g3def} is new for fractional derivatives, as the standard Riemann-Liouville fractional derivative does not allow one to calculate a single constant bound; see \cite[Remark 2.2]{bai}.
\end{remark}


 \noindent The following corollary is needed in Section 4 for the main existence theorem and example found there.
      

\begin{corollary} 
Let $\alpha,\beta \in (0,1]$. For every $s \in [0,1]$ we have
$$\max_{t \in [0,1]} G(t,s) \leq \left(\frac{1}{1-\left(\frac34\right)^{\alpha}}\right) \min_{t \in \left[\frac14,\frac34\right]} G(t,s), $$
where  $G(t,s)$ is the Green's function \eqref{gfconj} for the homogeneous problem 
\[ -D^{\beta}D^{\alpha} x(t)=0 \] satisfying the conjugate boundary conditions $x(0)=x(1)=0$.
\end{corollary}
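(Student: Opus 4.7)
The plan is to specialize the constant lower bound \eqref{Gbdconst} from the preceding Bounds on Green's Function theorem to the conjugate case $\gamma=\eta=1$, $\delta=\zeta=0$, with $n=4$. In this setting the Green's function reduces to \eqref{gfconj}, and the maximum is already known from the proof of that theorem, namely
\[ \max_{t\in[0,1]} G(t,s) = G(s,s) = \frac{1}{\alpha}\,s^\alpha\bigl(1-s^\alpha\bigr). \]
With these parameter choices, the constant $g_3$ from \eqref{g3def} collapses to
\[ g_3 = \min\left\{1-\left(\tfrac34\right)^\alpha,\; \left(\tfrac14\right)^\alpha\right\}. \]

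The one substantive step will be to identify which branch of this minimum is active for each $\alpha\in(0,1]$. I would introduce the auxiliary function $f(\alpha) := (1/4)^\alpha + (3/4)^\alpha - 1$, note that $f(1) = 0$, and compute
\[ f'(\alpha) = (1/4)^\alpha\ln(1/4) + (3/4)^\alpha\ln(3/4) < 0, \]
since both logarithms are negative. Hence $f$ is strictly decreasing on $(0,1]$, which forces $f(\alpha) \ge 0$ on this interval; equivalently $(1/4)^\alpha \ge 1-(3/4)^\alpha$, so the minimum is attained by $1-(3/4)^\alpha$.

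Once $g_3 = 1-(3/4)^\alpha$ is pinned down, the constant bound \eqref{Gbdconst} rearranges immediately to
\[ \max_{t\in[0,1]} G(t,s) = G(s,s) \le \frac{1}{1-(3/4)^\alpha}\,\min_{1/4 \le t \le 3/4} G(t,s), \]
which is exactly the claimed inequality. The only non-trivial part of the argument is the elementary monotonicity check that selects the correct branch of $g_3$; everything else is a direct specialization of facts already established earlier in this section.
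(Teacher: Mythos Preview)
Your proposal is correct and matches one of the two routes the paper itself offers. The paper's primary argument computes the ratio $G(t,s)/G(s,s)$ directly from \eqref{gfconj} and bounds each branch, arriving at the same pair $\{1-(3/4)^\alpha,\ (1/4)^\alpha\}$ and the same key inequality $(1/4)^\alpha \ge 1-(3/4)^\alpha$; it then closes with the remark ``One could also use \eqref{Gbdconst} and \eqref{g3def},'' which is exactly the specialization you carry out. Your monotonicity argument for $f(\alpha)=(1/4)^\alpha+(3/4)^\alpha-1$ supplies a clean justification for the inequality that the paper simply asserts, so in that small respect your write-up is more complete.
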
 

\begin{proof}
By \eqref{gfconj}, 
\[ \max_{t\in[0,1]}G(t,s) = G(s,s). \]
Then
\begin{eqnarray*}
 \min_{t\in\left[\frac{1}{4},\frac{3}{4}\right]} \frac{G(t,s)}{G(s,s)} 
 & = & \begin{cases} \frac{1-t^\alpha}{1-s^\alpha} &: 0\le s\le t\le 3/4 \\ \frac{t^\alpha}{s^\alpha} &: 1/4\le t\le s\le 1 \end{cases} \\
 &\ge& \begin{cases} \frac{1-(3/4)^\alpha}{1-(0)^\alpha} &: 0 \le s\le 3/4 \\ \frac{(1/4)^\alpha}{(1)^\alpha} &: 1/4\le s\le 1 \end{cases} \\
 & = & 1-(3/4)^\alpha,
\end{eqnarray*}
since $(1/4)^\alpha \ge 1-(3/4)^\alpha$ for all $\alpha\in(0,1]$. One could also use \eqref{Gbdconst} and \eqref{g3def}.
\end{proof}

\section{Fixed Point Preliminaries}

\noindent In this section we will state the fixed point theorem and the definitions that are used in the fixed point theorem which will be used to verify the existence of a positive solution to the fractional-order boundary value problem with conjugate boundary conditions.

\begin{definition}Let $E$ be a real Banach space.  A nonempty closed
convex set $P \subset E$ is called a \emph{cone} if it satisfies the
following two conditions:
\begin{enumerate}
  \item[$(i)$] $x \in P, \lambda \geq 0$ implies $\lambda x \in P$;
  \item[$(ii)$] $x \in P, -x \in P$ implies $x = 0$.
\end{enumerate}
\end{definition}

\noindent Every cone $P \subset E$ induces an ordering in $E$ given by
$$x \leq y  \; \; \mbox{if and only if} \; \;  y - x \in P.$$

\begin{definition} An operator is called completely continuous if it is  
continuous and maps bounded sets into precompact sets.
\end{definition}

\begin{definition}A map $\xi$ is said to be a nonnegative continuous  
concave functional on a cone $P$ of a real Banach space $E$ if  
$$\xi : P \rightarrow [0,\infty)$$
is continuous and
$$\xi(tx + (1-t)y) \geq t\xi(x) + (1-t)\xi(y)$$
for all $x,y \in P$ and $t \in [0,1]$.  Similarly we say the map  
$\phi$ is a nonnegative continuous convex functional on a cone $P$ of a
real Banach space $E$ if  
$$\phi : P \rightarrow [0,\infty)$$
is continuous and
$$\phi(tx + (1-t)y) \leq t\phi(x) + (1-t)\phi(y)$$
for all $x,y \in P$ and $t \in [0,1]$.  We say the map $\psi$ is a sub-linear functional if
$$\psi(tx)\leq t \psi(x) \;\; \hbox{for all} \;\; x \in P, t \in [0,1].$$
\end{definition}


\begin{definition}\label{A1} Let $P$ be a cone in a real Banach space $E$ and  
$\Omega$ be a bounded open subset of $E$ with $0 \in \Omega$.  Then a continuous functional $\phi : P \rightarrow [0,\infty)$ is said to satisfy property $(A1)$ if one of the following conditions hold:
\begin{enumerate}
 \item[$(i)$] $\phi$ is convex, $\phi(0) = 0$, $\phi(x) \neq 0$ if $x \neq 0$, and $\displaystyle{\inf_{x \in P \cap \partial \Omega} \phi(x) > 0}$,
 \item[$(ii)$] $\phi$ is sublinear, $\phi(0) = 0$, $\phi(x) \neq 0$ if $x \neq 0$, and $\displaystyle{\inf_{x \in P \cap \partial \Omega} \phi(x) > 0}$,
 \item[$(iii)$] $\phi$ is concave and unbounded.
\end{enumerate}
\end{definition}


\begin{definition}\label{A2}  Let $P$ be a cone in a real Banach space $E$ and  
$\Omega$ be a bounded open subset of $E$ with $0 \in \Omega$.  Then a continuous functional $\phi : P \rightarrow [0,\infty)$ is said to satisfy property $(A2)$ if one of the following conditions hold:
\begin{enumerate}
 \item[$(i)$] $\phi$ is convex, $\phi(0) = 0$ and $\phi(x) \neq 0$ if $x \neq 0$,
 \item[$(ii)$] $\phi$ is sublinear, $\phi(0) = 0$ and $\phi(x) \neq 0$ if $x \neq 0$, 
 \item[$(iii)$] $\phi(x+y) \geq \phi(x) + \phi(y)$ for all $x,y \in P$, $\phi(0) = 0$, $\phi(x) \neq 0$ if $x \neq 0$.
\end{enumerate}
\end{definition}

\noindent The following theorem is Avery, Henderson, and O'Regan's functional compression-expansion fixed point theorem \cite{aho}, which generalized the functional compression fixed point theorems of Anderson-Avery \cite{aa} and Sun-Zhang \cite{sunfpt}.


\begin{theorem}\label{fpt} Let $\Omega_1$ and $\Omega_2$ be two bounded open sets in a Banach Space $E$ such that $0 \in \Omega_1$ and $\overline{\Omega_1} \subseteq \Omega_2$ and $P$ is a cone in $E$.  Suppose $A:P \cap (\overline{\Omega_2} - \Omega_1) \rightarrow P$ is completely continuous, 
 $\xi$ and $\psi$ are nonnegative continuous  functionals on $P$, and one of the two conditions:
\begin{enumerate}
\item[$(K1)$] $\xi$ satisfies property $(A1)$ with $\xi(Ax) \geq \xi(x)$, for all $x\in P\cap \partial\Omega_1$, 
 and 
$\psi$ satisfies property $(A2)$ with $\psi(Ax) \leq \psi(x)$, for all $x\in P\cap \partial\Omega_2$ , or
\item[$(K2)$] $\xi$ satisfies property $(A2)$ with $\xi(Ax) \leq \xi(x)$, for all $x\in P\cap \partial\Omega_1$, 
and 
$\psi$ satisfies property $(A1)$ with $\psi(Ax) \geq \psi(x)$, for all $x\in P\cap \partial\Omega_2$, \end{enumerate}
\noindent  is satisfied.  Then $A$ has at least one fixed point in $P \cap (\overline{\Omega_2} - \Omega_1)$.
\end{theorem}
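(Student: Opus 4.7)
The plan is to prove the theorem via fixed point index theory for completely continuous self-maps on the cone $P$. The overall scheme is to compute $i(A,P\cap\Omega_1,P)$ and $i(A,P\cap\Omega_2,P)$ separately, and then conclude from additivity. For case (K1) the claim will be
\[ i(A, P\cap\Omega_1, P) = 0 \qquad\text{and}\qquad i(A, P\cap\Omega_2, P) = 1, \]
so that by additivity
\[ i(A, P\cap(\Omega_2\setminus\overline{\Omega_1}), P) = 1 - 0 = 1 \neq 0, \]
which yields a fixed point in $P \cap (\Omega_2 \setminus \overline{\Omega_1}) \subseteq P \cap (\overline{\Omega_2} \setminus \Omega_1)$. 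Case (K2) is handled by the mirror argument, interchanging the roles of the two boundaries (the indices come out as $1$ inside and $0$ outside, whose difference is $-1 \neq 0$). Throughout, one may assume $A$ has no fixed point on $P \cap (\partial\Omega_1 \cup \partial\Omega_2)$; otherwise such a point already lies in $P \cap (\overline{\Omega_2}\setminus\Omega_1)$ and the theorem is immediate.

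For the outer computation $i(A,P\cap\Omega_2,P) = 1$, I would use the homotopy $H(\mu,x) = \mu A x$ for $\mu \in [0,1]$, which connects $A$ to the constant map $0$ (whose index is $1$ on any open neighborhood of $0$ in $P$, by normalization). Admissibility reduces to showing $A x \neq t x$ for $t \geq 1$ and $x \in P \cap \partial\Omega_2$. Applying $\psi$ to a putative equality gives $\psi(t x) = \psi(Ax) \leq \psi(x)$. Each of the three subcases of property (A2) implies the opposite bound $\psi(t x) \geq t \psi(x)$ for $t \geq 1$: in (i) use convexity together with $\psi(0)=0$; in (ii) apply the defining sublinearity with reciprocal factor $1/t \in [0,1]$; in (iii) use the superadditivity $\psi(t x) = \psi(x + (t-1)x) \geq \psi(x) + \psi((t-1)x)$, the second term being strictly positive since $\psi$ vanishes only at $0$. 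Combined with $\psi(x) > 0$ for $x \in P\cap\partial\Omega_2$ (which is nonzero since $0\in\Omega_1\subseteq\Omega_2$), these force $t \leq 1$, so the homotopy is admissible and the outer index equals $1$.

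For the inner computation $i(A,P\cap\Omega_1,P) = 0$, I would choose $u_0 \in P$ with $\xi(u_0) > 0$ (available in each subcase of (A1): the hypothesis $\inf_{P\cap\partial\Omega_1}\xi > 0$ in (i) and (ii) guarantees nonzero points on which $\xi$ is positive, while (iii) provides arbitrarily large values of $\xi$ on $P$). I would then show that $x \neq A x + \lambda u_0$ for every $\lambda \geq 0$ and every $x \in P \cap \partial\Omega_1$; a classical fixed point index result on cones (see, e.g., Guo and Lakshmikantham) converts this into $i(A, P\cap\Omega_1, P) = 0$, since for $\lambda$ large the translated map pushes its range outside the bounded set $\Omega_1$ and a parameter homotopy in $\lambda$ transfers this to $A$. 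To verify the nonexistence, apply $\xi$ to $A x = x - \lambda u_0$ and combine $\xi(Ax)\geq\xi(x)$ with the subcase-specific structure of $\xi$ to derive $\xi(Ax + \lambda u_0) > \xi(Ax) \geq \xi(x) = \xi(Ax + \lambda u_0)$, a contradiction; the case $\lambda = 0$ is excluded since $A$ has no fixed point on $P\cap\partial\Omega_1$.

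The main obstacle is the case analysis across the three subcases of each of (A1) and (A2): although the index strategy is uniform, each subcase demands a slightly different manipulation of the functional inequality, and one must carefully invoke the "$\phi(x)\neq 0$ if $x\neq 0$" clause (or the unboundedness clause in (A1)(iii)) to ensure strict positivity on the relevant boundaries. Once the three subcases are dispatched, the theorem follows at once from additivity, normalization, and homotopy invariance of the fixed point index for completely continuous maps on cones.
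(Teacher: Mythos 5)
The paper offers no proof of Theorem \ref{fpt}: it is imported verbatim from Avery, Henderson, and O'Regan \cite{aho}, so there is no in-paper argument to compare against. Judged on its own terms, your proposal has the right architecture --- extend $A$ to all of $P\cap\overline{\Omega_2}$ (a step you omit but need, since $A$ is only given on the annulus), compute the fixed point index to be $1$ on $P\cap\Omega_2$ and $0$ on $P\cap\Omega_1$, and subtract --- and your outer computation is sound: in each subcase of $(A2)$ the inequality $\psi(tx)\ge t\psi(x)$ for $t\ge 1$ does follow, and combined with $\psi(Ax)\le\psi(x)$ and $\psi>0$ on $P\cap\partial\Omega_2$ it rules out $Ax=tx$ with $t>1$.

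The gap is in the inner computation. Your verification of $x\ne Ax+\lambda u_0$ hinges on the inequality $\xi(Ax+\lambda u_0)>\xi(Ax)$, which you assert follows from ``the subcase-specific structure of $\xi$.'' It does not: none of convexity, sublinearity, or concavity forces $\xi$ to increase along the ray $y\mapsto y+\lambda u_0$. (Take $E=\R^2$, $P$ the first quadrant, $\xi(x_1,x_2)=\max\{x_1,x_2\}$: convex, vanishing only at $0$, yet constant along $(1,0)+\lambda(0,1)$ for $\lambda\in[0,1]$.) The three subcases of $(A1)$ genuinely require different mechanisms, and the hypothesis $\inf_{P\cap\partial\Omega_1}\xi>0$ is not there merely to supply a point with $\xi(u_0)>0$. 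For $(A1)(i)$ and $(ii)$ the workable route is the other standard index-zero criterion: show $Ax\ne\mu x$ for $\mu\in(0,1]$ and $x\in P\cap\partial\Omega_1$ (if $Ax=\mu x$, then $\xi(Ax)=\xi(\mu x)\le\mu\xi(x)\le\xi(x)\le\xi(Ax)$, forcing $\mu=1$ because $\xi(x)\ge\inf_{P\cap\partial\Omega_1}\xi>0$, i.e.\ a boundary fixed point, which is excluded), together with $\inf_{x\in P\cap\partial\Omega_1}\|Ax\|>0$, which is exactly where $\inf\xi>0$, $\xi(0)=0$, and continuity enter. For $(A1)(iii)$ one instead homotopes $A$ to a constant map along $(1-\mu)Ax+\mu u_1$, choosing $u_1\in P$ with $\xi(u_1)>\sup_{x\in P\cap\partial\Omega_1}\xi(x)$ (possible by unboundedness once that supremum is controlled) and using concavity to get $\xi(x)\ge(1-\mu)\xi(Ax)+\mu\xi(u_1)\ge(1-\mu)\xi(x)+\mu\xi(u_1)$, whence $\mu=0$. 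Until the index-zero half is reworked along these lines, the argument is incomplete.
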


\section{Existence of a Positive Solution}

  \noindent Let the Banach space 
  $$E = C[0,1]$$ 

  \noindent be endowed with the maximum norm, 
  $$\left\|x\right\|=\displaystyle{\max_{0\leq t\leq 1}}\left|x(t)\right|,$$ 

  \noindent and define the cone $P \subset E$ by 
    $$P = \left\{ x \in E \left| 
  \begin{array}{l}  
     \mbox{nonnegative valued on [0,1]}, \; \mbox{and}  \\ 
       \|x\| \leq \left(\frac{1}{1-\left(\frac34\right)^{\alpha}}\right)\displaystyle{\min_{t \in \left[\frac14,\frac34\right]} x(t)} 
  \end{array} \right. \right\}.$$

  \noindent Let the nonnegative continuous functionals $\phi$ and $\psi$ be defined on the cone $P$ by 

  \begin{align} 
    & \psi (x) = \min_{t \in \left[\frac14,\frac34\right]} x(t) \;\;\; \mbox{and} 
     \\  
    & \phi(x) = \max_{t \in  [0,1]}x(t) = \|x\|. 
  \end{align} 

The following theorem is our main result.

\begin{theorem} \label{app}
  Let $\alpha,\beta \in (0,1]$ and suppose there exists positive numbers $r$ and $R$ such 
  that 
    $0 < \left(\frac{1}{1-\left(\frac34\right)^{\alpha}}\right) r < R, $ 
  and suppose $f$ satisfies the following conditions: 
  \begin{enumerate} 
    \item[$(i)$]  $f(s,x)\le R(\alpha+\beta)(2\alpha+\beta)$ for all $s\in[0,1]$ and all $x \in [0,R]$, 
    \item[$(ii)$]  $f(s,x)\ge r N$ for all $s\in[1/4,3/4]$ and for all $x \in \left[r,\frac{r}{1-\left(\frac34\right)^{\alpha}}\right]$,
  \end{enumerate} 
where 
\[ N=\left(\left(1-\left(\frac34\right)^\alpha\right)\int_{1/4}^{3/4}G(s,s)d_{\beta}s\right)^{-1} \]
and 
\begin{eqnarray*}
 \left(1-\left(\frac34\right)^\alpha\right)\int_{1/4}^{3/4}G(s,s)d_{\beta}s 
 & = &  \left(1-\left(\frac34\right)^\alpha\right)\left\{\left(\frac34\right)^{\alpha+\beta} \left[\frac{1}{\alpha+\beta}-\frac{(3/4)^\alpha}{2\alpha+\beta}\right]\right. \\ 
 &   & \left.-\left(\frac14\right)^{\alpha+\beta} \left[\frac{1}{\alpha+\beta}-\frac{(1/4)^\alpha}{2\alpha+\beta}\right]\right\}
\end{eqnarray*}
  \noindent Then, the second order conjugate boundary value problem  has at least one positive solution $x^*$  such 
  that 
  $$r \leq \min_{t \in   \left[\frac14,\frac34\right]}x^*(t)  \;\; 
  \mbox{and} 
  \;\; \max_{t \in \left[0,1\right] } x^*(t) \leq R.$$  
\end{theorem}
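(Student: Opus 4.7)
The plan is to reformulate the BVP $-D^\beta D^\alpha x = f(t,x)$, $x(0) = x(1) = 0$ as the fixed-point equation $x = Ax$ on $P$, where
\[ Ax(t) := \int_0^1 G(t,s) f(s, x(s)) \, d_\beta s \]
and $G$ is the conjugate Green's function \eqref{gfconj}. Existence of a positive fixed point will follow from Theorem \ref{fpt} under condition (K1), with the theorem's $\xi$ taken to be the excerpt's $\psi$ and the theorem's ``$\psi$'' taken to be $\phi$. I expect the crux to be matching the constants in hypotheses (i) and (ii) against explicit integrals of $G(s,s)\,s^{\beta-1}$ so that the compression/expansion inequalities close up with equality.

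The preliminary steps are to check that $A: P \to P$ and that $A$ is completely continuous. Nonnegativity of $Ax$ is clear from $f, G \ge 0$; the cone inclusion follows from the preceding corollary, which gives $\max_t G(t,s) = G(s,s)$ and $\min_{t \in [1/4,3/4]} G(t,s) \ge (1-(3/4)^\alpha) G(s,s)$, and hence
\[ \|Ax\| \le \int_0^1 G(s,s) f(s,x(s)) \, d_\beta s \le \frac{1}{1-(3/4)^\alpha} \min_{t \in [1/4,3/4]} Ax(t). \]
Complete continuity is a routine Arzel\`a--Ascoli argument given the continuity of $f$ and $G$.

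Next I would choose bounded open sets $\Omega_1 \ni 0$ and $\Omega_2$ in $E$ such that $P \cap \partial\Omega_1 = \{x \in P : \psi(x) = r\}$ and $P \cap \partial\Omega_2 = \{x \in P : \phi(x) = R\}$; in practice one intersects the level sets $\{\psi < r\}$ and $\{\phi < R\}$ with a sufficiently large ball, and the cone inequality guarantees $\overline{\Omega_1} \subset \Omega_2$ since $r/(1-(3/4)^\alpha) < R$. On $P$ the functional $\psi$ is concave and unbounded, so it satisfies (A1)(iii), while $\phi$ is convex with $\phi(0)=0$ and $\phi(x)\ne 0$ for $x\ne 0$, so it satisfies (A2)(i).

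To verify (K1), for $x \in P \cap \partial\Omega_1$ the equality $\psi(x) = r$ together with the cone inequality forces $x(s) \in [r,\,r/(1-(3/4)^\alpha)]$ on $[1/4,3/4]$, so hypothesis (ii) yields $f(s,x(s)) \ge rN$ there, and the lower bound on $G$ delivers
\[ \psi(Ax) \ge rN(1-(3/4)^\alpha) \int_{1/4}^{3/4} G(s,s) \, d_\beta s = r = \psi(x), \]
by the definition of $N$. For $x \in P \cap \partial\Omega_2$, $\|x\| = R$ places $x(s) \in [0,R]$, and hypothesis (i) combined with the identity
\[ \int_0^1 G(s,s) \, d_\beta s = \frac{1}{\alpha}\int_0^1 \left(s^{\alpha+\beta-1} - s^{2\alpha+\beta-1}\right) ds = \frac{1}{(\alpha+\beta)(2\alpha+\beta)} \]
yields $\phi(Ax) \le R = \phi(x)$. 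Theorem \ref{fpt} then produces a fixed point $x^* \in P \cap (\overline{\Omega_2} \setminus \Omega_1)$ with the asserted bounds. The only real obstacle is the precise calibration of constants so the two inequalities close up tightly, together with the minor technical point of arranging that the level sets of $\psi$ and $\phi$ induce honest bounded open subsets of $E$.
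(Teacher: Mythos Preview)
Your proposal is correct and follows essentially the same route as the paper: define the integral operator $A$ via the conjugate Green's function, use the Green's function bound from the preceding corollary to show $A:P\to P$, set $\Omega_1=\{\psi<r\}$ and $\Omega_2=\{\phi<R\}$, and verify the two (K1) inequalities using hypotheses (i)--(ii) together with the exact values of $\int_0^1 G(s,s)\,d_\beta s$ and $\int_{1/4}^{3/4} G(s,s)\,d_\beta s$. The only noticeable difference is bookkeeping: you invoke (A1)(iii) for $\psi$ (concave, unbounded) and (A2)(i) for $\phi$ (convex), whereas the paper cites (A1)(i) for $\phi$ and (A2)(iii) for $\psi$; your assignment is the one that actually matches the roles required by (K1), so if anything your version is cleaner here.
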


\begin{proof}

   Define the completely continuous operator $A$ by 
      $$Ax(t)=\int_{0}^{1}G(t,s)f(s,x(s))d_{\beta}s$$ 
  then if we can show that $A$ has a fixed point in $P$ then we have verified the existence of a positive solution.
  Let $x \in P$, then from properties of $G(t,s)$ we have that $Ax(t) \geq 0$ 
  and 
\begin{eqnarray*} 
    \phi(Ax)  & = & \max_{t \in [0,1]}\int_{0}^{1}G(t,s)f(s,x(s))d_{\beta}s\\
&\leq& 
    \left(\frac{1}{1-\left(\frac34\right)^{\alpha}}\right) \min_{t \in \left[\frac14,\frac34\right]} \int_{0}^{1}G(t,s)f(s,x(s))d_{\beta}s\\ 
    & =& \left(\frac{1}{1-\left(\frac34\right)^{\alpha}}\right) \psi(Ax)
  \end{eqnarray*} 
thus, $Ax \in P$ and we have verified that $A: P \rightarrow P$. 

  \vspace{.15in}

  \noindent For all $x \in P$  we have 
  $$\psi(x) \leq \phi(x),$$ 
  thus if we let
$$\Omega_1=\{x \; : \; \psi(x) < r\} \;\; \mbox{and}  \;\; \Omega_2=\{x \; : \; \phi(x) < R\}$$
we have that 
$$0 \in \Omega_1 \;\; \mbox{and}  \;\;\overline{\Omega_1} \subseteq \Omega_2$$

since if $x \in \overline{\Omega_1}$ then
$$\min_{t \in   \left[\frac14,\frac34\right]} x(t) \leq r$$
hence since $x \in P$ we have
$$\max_{t \in   \left[0,1\right]} x(t) \leq \left(\frac{1}{1-\left(\frac34\right)^{\alpha}}\right)\min_{t \in   \left[\frac14,\frac34\right]} x(t) \leq  \left(\frac{1}{1-\left(\frac34\right)^{\alpha}}\right) r < R.$$
Clearly $\Omega_1$ and $\Omega_2$ being bounded open subsets of $P$.

  \vspace{.1in} 

  \noindent {\bf Claim 1:}  If $x \in P \cap \partial \Omega_2$, then $\phi(Ax) \leq \phi(x)$.     
  \vspace{.1in} 
\noindent Let $x \in \partial \Omega_2$, thus $\phi(x) = R$ hence by condition $(i)$ we have

  \begin{eqnarray*} 
    \phi (Ax)  & = & \max_{t \in \left[0,1\right] }\int_{0}^{1}G(t,s)f(s,x(s))d_{\beta}s\\
&\leq &  R(\alpha+\beta)(2\alpha+\beta)\int_{0}^{1}G(s,s)d_{\beta}s \\ 
    & = & R = \phi(x). 
  \end{eqnarray*}

  \vspace{.1in}

  \noindent {\bf Claim 2:}  If $x \in P \cap \partial \Omega_1$, 
  then $\psi(Ax) \geq \psi(x)$.     
  \vspace{.1in} 
\noindent Let $x \in \partial \Omega_1$, thus $\psi(x) = r$ and $\|x\| \leq \frac{r}{1-\left(\frac34\right)^{\alpha}}$, hence by condition $(ii)$ we have

\begin{eqnarray*} 
  \psi (Ax) & = & \min_{t \in   \left[\frac14,\frac34\right]} \int_{0}^{1}G(t,s)f(s,x(s))d_{\beta}s\\
            &\ge& rN  \left(1-\left(\frac34\right)^\alpha\right)\int_{1/4}^{3/4} G(s,s)d_{\beta}s \\ 
            & = & r = \psi(x). 
\end{eqnarray*}

  \vspace{.1in} 

\noindent Clearly $\phi$ satisfies property $(A1)(i)$ and $\psi$ satisfies property $(A2)(iii)$ thus the hypothesis $(K1)$ of Theorem \ref{fpt} is satisfied, and therefore $A$ has a fixed point in $\overline{\Omega_2}-\Omega_1$.

\end{proof}

\begin{example} 
To compare our results with those in \cite[Example 3.1]{bai}, where the authors use the Riemann-Liouville fractional derivative of order $\frac32\in(1,2]$, we take $\alpha=1$, $\beta=\frac12$, $r=\frac{11}{1000}$, $R=\frac{9}{25}$, and $f(s,x)=1+\frac{1}{4}\sin s+x^2$ in Theorem \ref{app} to get the following.
One can check that $0 < 4r < R$, and $f$ satisfies the following conditions: 
  \begin{enumerate} 
    \item[$(i)$]  $f(s,x)\le \frac{15}{4}R=\frac{27}{20}$ for all $s\in[0,1]$ and all $x \in \left[0,\frac{9}{25}\right]$, 
    \item[$(ii)$]  $f(s,x)\ge \frac{960r}{33\sqrt{3}-17}=\frac{264}{25(33\sqrt{3}-17)}$ for all $s\in\left[\frac14,\frac34\right]$ and for all $x \in \left[r,4r\right]$.
  \end{enumerate} 
Thus by Theorem \ref{app} the $\frac32$-order conjugate boundary value problem  
  \[ -D^{0.5}x'(t)=1+\frac{1}{4}\sin t+x(t)^2, \quad x(0)=x(1)=0 \]
  has at least one positive solution $x^*$ such that 
  $$ \frac{11}{1000} \leq \min_{t \in   \left[\frac14,\frac34\right]}x^*(t)  \;\; 
  \mbox{and} 
  \;\; \max_{t \in \left[0,1\right] } x^*(t) \leq \frac{9}{25}. $$
In \cite[Example 3.1]{bai}, the result is the existence of a positive solution $x^\dagger$ such that
\[ \frac{1}{14} \le \max_{t \in \left[0,1\right] } x^\dagger(t) \le 1, \]
with no information on the minimum value of the function.
\end{example}


\end{document}